\newtheorem{theorem}{Theorem}[section]
\newtheorem{corollary}{Corollary}[section]
\newtheorem{lemma}{Lemma}[section]
\newtheorem{proposition}{Proposition}[section]
\theoremstyle{definition}
\newtheorem{definition}{Definition}
\newtheorem{remark}{Remark}
\newtheorem{example}{Example}
\journal{Journal of \LaTeX\ Templates}
\begin{document}

\begin{frontmatter}

\title{On Tsallis extropy with an application to pattern recognition}


\author[mymainaddress]{Narayanaswamy Balakrishnan}

\author[mysecondaryaddress]{Francesco Buono\corref{mycorrespondingauthor}}
\cortext[mycorrespondingauthor]{Corresponding author}
\ead{francesco.buono3@unina.it}
\author[mysecondaryaddress]{Maria Longobardi}

\address[mymainaddress]{McMaster University, Canada}
\address[mysecondaryaddress]{Università di Napoli Federico II, Italy}

\begin{abstract}
Recently, a new measure of information called extropy has been introduced by Lad, Sanfilippo and Agrò as the dual version of Shannon entropy. In the literature, Tsallis introduced a measure for a discrete random variable, named Tsallis entropy, as a generalization of Boltzmann-Gibbs statistics. In this work, a new measure of discrimination, called Tsallis extropy, is introduced and some of its properties are then discussed. The relation between Tsallis extropy and entropy is given and some bounds are also presented. Finally, an application of this extropy to pattern recognition is demonstrated.
\end{abstract}

\begin{keyword}
Measures of information \sep Shannon entropy \sep Tsallis entropy \sep Extropy \sep Pattern recognition
\MSC[2020] 62H30\sep  94A17
\end{keyword}

\end{frontmatter}

\linenumbers

\section{Introduction}
Let $X$ be a discrete random variable with support $S=\{x_1,\dots,x_N\}$ and with corresponding probability vector $\bold p=(p_1,\dots,p_N)$, i.e., $\mathbb P(X=x_i)=p_i$, for $i=1,\dots,N$. In 1948, Shannon \cite{shannon} introduced a measure of information related to the information content and the uncertainty about an event associated with a discrete random variable. This measure, come to be known as Shannon entropy, is defined as
\begin{equation}
H(X)=-\sum_{i=1}^N p_i \log p_i,
\end{equation}
where $\log$ is the natural logarithm. The concept of entropy has since been generalized in different ways. Analogous to the discrete case, the Shannon entropy has been defined in the continuous case as well as
\begin{equation}
\nonumber
H(X)=-\int_0^{+\infty} f(x)\log f(x)\mathrm dx,
\end{equation}
where $X$ is a non-negative random variable with probability density function $f$. Although the definitions are similar, the entropy is always non-negative in the discrete case while it could be negative in the continuous case. In the literature, different versions of entropy have been introduced, including residual and past entropy \cite{dicrescenzo, ebrahimi}, cumulative residual and past entropy \cite{dicrescenzo3, dicrescenzo4, rao,tahmasebi2}, weighted entropies \cite{dicrescenzo2}, dynamic entropies \cite{asadi}, weighted cumulative residual entropy \cite{mirali}, generalized dynamic entropies \cite{sekeh}, and also some relationships with other concepts in reliability theory \cite{balakrishnan2,longobardi,tahmasebi}.

Lad \emph{et al.} \cite{lad} introduced the extropy, a measure of uncertainty, as a dual version of the entropy. It is useful for comparing the uncertainty between two random variables $X$ and $Y$, i.e., if the extropy of $X$ is less than that of $Y$, then $X$ has less uncertainty than $Y$. For a discrete random variable $X$, the extropy $J(X)$ is defined as
\begin{equation}
\label{eq4}
J(X)=-\sum_{i=1}^N (1-p_i)\log(1-p_i),
\end{equation}
and it is always non-negative. The extropy has subsequently been widely studied and several different versions have also been proposed in the literature; see \cite{balakrishnan, jahan, kamari, qiu}.

Among the different generalizations of Shannon entropy, the Tsallis entropy \cite{tsallis} has attracted considerable attention. For a discrete random variable, the Tsallis entropy $S_{\alpha}(X)$ is defined as
\begin{equation}
\label{eq3}
S_{\alpha}(X)=\frac{1}{\alpha-1}\left(1-\sum_{i=1}^N p_i^{\alpha}\right),
\end{equation}
where $\alpha>0$ and $\alpha\ne1$. It is a generalization of Shannon entropy since it is evident that
\begin{equation}
\label{eq5}
\lim_{\alpha\to1}S_{\alpha}(X)=H(X).
\end{equation}

In the present work, we introduce a measure of uncertainty dual to the Tsallis entropy in \eqref{eq3}, and it may be referred to as the Tsallis extropy. The rest of the paper proceeds as follows. In Section \ref{sec2}, the Tsallis extropy is defined, and some of its properties are given, and its relationships to other known measures are described. In Section \ref{sec3}, we study the maximum Tsallis extropy and establish an upper bound for it. In Section \ref{sec4}, we apply this new measure to a problem in pattern recognition, and compare some other known methods with the method based on the extropy. Finally, in Section \ref{sec5}, we provide some concluding remarks and summarize the results of this work.

\section{Tsallis extropy}
\label{sec2}

In this section, we introduce the Tsallis extropy, dual to the Tsallis entropy, as a new measure of uncertainty. It is defined to preserve a relationship similar to the one between Shannon entropy and extropy. It is important to mention at this point that Lad \emph{et al.} \cite{lad} proved the following property with regard to the sum of entropy and extropy:
\begin{equation}
\label{eq1}
H(\bold p)+J(\bold p)=\sum_{i=1}^N H(p_i,1-p_i)=\sum_{i=1}^N J(p_i,1-p_i),
\end{equation}
where $H(p_i,1-p_i)=J(p_i,1-p_i)=-p_i\log p_i - (1-p_i)\log(1-p_i)$. Observe that the two terms on the RHS of this expression are the entropy and extropy of a discrete random variable taking on two values with the corresponding probabilities as $(p_i,1-p_i)$.

\begin{definition}
Let $X$ be a discrete random variable with support $S=\{x_1,\dots,x_N\}$ and with corresponding probability vector $\bold p=(p_1,\dots,p_N)$, and let $\alpha>0$, $\alpha\ne1$. Then, the Tsallis extropy of $X$, $JS_{\alpha}(X)$, is defined as
\begin{equation}
\label{eq2}
JS_{\alpha}(X)=\frac{1}{\alpha-1}\left(N-1-\sum_{i=1}^N (1-p_i)^{\alpha}\right).
\end{equation}
\end{definition}

\begin{remark}
The definition in \eqref{eq2} is obtained in a different way from the Tsallis entropy in \eqref{eq3}. In fact, using the normalization condition, we can rewrite the Tsallis entropy as
\begin{equation}
\nonumber
S_{\alpha}(X)=\frac{1}{\alpha-1}\sum_{i=1}^N p_i(1-p_i^{\alpha-1}).
\end{equation}
We can then introduce the Tsallis extropy as
\begin{equation}
\label{eq9}
\frac{1}{\alpha-1}\sum_{i=1}^N (1-p_i)\left(1-(1-p_i)^{\alpha-1}\right)=\frac{1}{\alpha-1}\left(\sum_{i=1}^N (1-p_i)-\sum_{i=1}^N (1-p_i)^{\alpha}\right),
\end{equation}
from which the definition in \eqref{eq2} follows readily. Intuitively, the extropy corresponding to a fixed entropy could be simply introduced by replacing all the $p_i$ by $(1-p_i)$ as seen in \eqref{eq9}, but we will see in Proposition \ref{prsum} that our definition has a deeper meaning as it preserves the invariance property about the sum of entropy and extropy \eqref{eq1} studied by Lad \emph{et al.} \cite{lad}.
\end{remark}

\begin{proposition}
\label{prop1}
The Tsallis extropy in non-negative.
\end{proposition}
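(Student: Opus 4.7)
The plan is to exploit the normalization constraint $\sum_{i=1}^N p_i = 1$, which implies $\sum_{i=1}^N (1-p_i) = N-1$. Substituting this identity into the definition \eqref{eq2} lets me rewrite the Tsallis extropy in exactly the form displayed in \eqref{eq9}, namely
\begin{equation*}
JS_\alpha(X) \;=\; \frac{1}{\alpha-1}\sum_{i=1}^N (1-p_i)\bigl(1-(1-p_i)^{\alpha-1}\bigr).
\end{equation*}
From here, non-negativity will follow by a termwise sign analysis in which the sign of the prefactor $1/(\alpha-1)$ and the sign of each summand always agree.

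Concretely, set $q_i = 1-p_i \in [0,1]$ and split into the two natural cases. If $\alpha > 1$, then $q_i^{\alpha-1}\le 1$, so $1-q_i^{\alpha-1}\ge 0$ and $q_i(1-q_i^{\alpha-1})\ge 0$; combined with $1/(\alpha-1)>0$, each summand is $\ge 0$. If $0<\alpha<1$, then the exponent $\alpha-1$ is negative and $q_i\in[0,1]$, so $q_i^{\alpha-1}\ge 1$ (interpreting the boundary case $q_i=0$ as giving a summand of $0$); hence $1-q_i^{\alpha-1}\le 0$, making each summand $\le 0$, while $1/(\alpha-1)<0$, so again the product is $\ge 0$. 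Summing over $i$ yields $JS_\alpha(X)\ge 0$ in both regimes.

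The only subtlety worth flagging is the boundary behaviour at $p_i=1$, which gives $q_i = 0$: for $0<\alpha<1$ the factor $q_i^{\alpha-1}$ is formally infinite, but it is multiplied by $q_i = 0$ and the product $q_i - q_i^\alpha$ is $0$, so the summand vanishes and causes no trouble. This is not really an obstacle, just a point to mention cleanly. Overall the proof is short: the key step is the rewriting using $\sum(1-p_i)=N-1$, and the rest is an elementary inequality $q^\alpha \le q$ for $\alpha>1$ (reversed for $\alpha<1$) on $[0,1]$.
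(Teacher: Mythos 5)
Your proof is correct and follows essentially the same route as the paper: both rewrite $JS_\alpha(X)$ in the form of the left-hand side of \eqref{eq9} and then argue termwise that the sign of each summand matches the sign of the prefactor $1/(\alpha-1)$ in the two regimes $\alpha>1$ and $0<\alpha<1$. Your explicit handling of the boundary case $q_i=0$ is a small, welcome addition that the paper leaves implicit.
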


\begin{proof}
Let us consider the expression of Tsallis extropy in the LHS of \eqref{eq9}. For $\alpha>1$, the function $h(x)=x^{\alpha-1}$ is increasing in $x>0$, and so
\begin{equation}
\nonumber
1-(1-p_i)^{\alpha-1}\ge 0,
\end{equation}
and hence the Tsallis extropy is non-negative. For $0<\alpha<1$, the function $h(x)=x^{\alpha-1}$ is decreasing in $x>0$, i.e.,
\begin{equation}
\nonumber
1-(1-p_i)^{\alpha-1}\leq 0,
\end{equation}
and hence the Tsallis extropy is non-negative due to the multiplicative factor $\frac{1}{\alpha-1}$ being negative.
\end{proof}

\begin{remark}
Proposition \ref{prop1} is logical since the situation characterized by the lowest uncertainty is the one in which we have $\bold p=(1,0,\dots,0)$, which corresponds to $JS_{\alpha}(X)=0$.
\end{remark}

We now present some examples to demonstrate the evaluation of Tsallis extropy.

\begin{example}
\label{ex1}
Let $X$ be a discrete random variable uniformly distributed over $\{1,\dots,N\}$. Then, the Tsallis extropy is given by
\begin{eqnarray}
\nonumber
JS_{\alpha}(X)&=&\frac{1}{\alpha-1}\left\{N-1-\sum_{i=1}^N\left(1-\frac{1}{N}\right)^{\alpha}\right\} \\
\nonumber
&=& \frac{1}{\alpha-1}\left\{N-1-\frac{(N-1)^{\alpha}}{N^{\alpha-1}}\right\} \\
\label{eq7}
&=& \frac{N-1}{\alpha-1} \frac{\{N^{\alpha-1}-(N-1)^{\alpha-1}\}}{N^{\alpha-1}}.
\end{eqnarray}
\end{example}

In the following proposition, we show that the Tsallis extropy reduces to the extropy in \eqref{eq4} when $\alpha$ tends to $1$. Bear in mind that this is a classical property of Tsallis and Shannon entropies.

\begin{proposition}
Let $X$ be a discrete random variable with finite support $S$ and with corresponding probability vector $\bold p$. Then,
\begin{equation}
\lim_{\alpha\to 1} JS_{\alpha}(X)=J(X).
\end{equation}
\end{proposition}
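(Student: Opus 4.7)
The plan is to reduce the claim to the definition of $J(X)$ via a standard L'Hôpital-type argument, using the equivalent expression for Tsallis extropy already derived in \eqref{eq9}.

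First, I would rewrite $JS_\alpha(X)$ in the form displayed in \eqref{eq9}, namely
\[
JS_\alpha(X)=\sum_{i=1}^N (1-p_i)\cdot\frac{1-(1-p_i)^{\alpha-1}}{\alpha-1}.
\]
This pulls the indeterminacy into $N$ separate termwise limits. For each fixed $i$ with $p_i<1$, the inner factor $\tfrac{1-(1-p_i)^{\alpha-1}}{\alpha-1}$ is, up to sign, the difference quotient of the function $\alpha\mapsto (1-p_i)^{\alpha-1}$ at $\alpha=1$; since $\tfrac{d}{d\alpha}(1-p_i)^{\alpha-1}\big|_{\alpha=1}=\log(1-p_i)$, we obtain
\[
\lim_{\alpha\to 1}\frac{1-(1-p_i)^{\alpha-1}}{\alpha-1}=-\log(1-p_i).
\]
Multiplying by $(1-p_i)$ and summing over $i$ yields exactly $-\sum_{i=1}^N(1-p_i)\log(1-p_i)=J(X)$, where the terms with $p_i=1$ contribute $0$ under the standard convention $0\log 0=0$.

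Alternatively, one could start directly from \eqref{eq2}: since $\sum_{i=1}^N(1-p_i)=N-1$, the numerator $N-1-\sum_{i=1}^N(1-p_i)^\alpha$ vanishes at $\alpha=1$, so the ratio is of $0/0$ form and L'Hôpital's rule applies. Differentiating the numerator with respect to $\alpha$ gives $-\sum_{i=1}^N(1-p_i)^\alpha\log(1-p_i)$, which at $\alpha=1$ equals $J(X)$, while the denominator has derivative $1$.

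The main (and essentially only) obstacle is bookkeeping: ensuring that the swap of limit and finite sum is justified (automatic here because $N$ is finite and each term is continuous in $\alpha$) and handling degenerate indices where $p_i=1$ by invoking the convention $0\log 0=0$. Both steps are routine, so the argument is short.
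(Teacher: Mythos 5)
Your proposal is correct, and your ``alternative'' argument is precisely the paper's own proof: apply L'H\^opital's rule to the $0/0$ form in \eqref{eq2}, differentiate the numerator in $\alpha$ to get $-\sum_{i=1}^N(1-p_i)^{\alpha}\log(1-p_i)$, and evaluate at $\alpha=1$. Your primary route via termwise difference quotients of $\alpha\mapsto(1-p_i)^{\alpha-1}$ is just a per-term repackaging of the same computation, so there is nothing substantively different here.
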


\begin{proof}
From \eqref{eq2} and by using L'Hôpital's rule, we find
\begin{eqnarray}
\nonumber
\lim_{\alpha\to 1} JS_{\alpha}(X)&=&\lim_{\alpha\to1} \frac{1}{\alpha-1}\left(N-1-\sum_{i=1}^N (1-p_i)^{\alpha}\right) \\
\nonumber
&=&-\lim_{\alpha\to1} \sum_{i=1}^N (1-p_i)^{\alpha}\log(1-p_i) \\
\nonumber
&=& -\sum_{i=1}^N (1-p_i)\log(1-p_i)=J(X).
\end{eqnarray}
\end{proof}

Next, for discussing the sum of Tsallis entropy and extropy, similar to the one presented in \eqref{eq1}, we need the following lemma about random variables taking on two values.

\begin{lemma}
\label{lem1}
Let $X$ be a discrete random variable taking on two values with corresponding probabilities $(p,1-p)$. Then,
\begin{equation}
JS_{\alpha}(X)=S_{\alpha}(X).
\end{equation}
\end{lemma}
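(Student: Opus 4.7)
The plan is to prove the identity by straightforward substitution into the definitions, exploiting the symmetry that arises when $N=2$. With only two outcomes, the vector $(1-p_1, 1-p_2) = (1-p, p)$ is just a permutation of $(p_1, p_2) = (p, 1-p)$, so any symmetric function of the $p_i$'s equals the same function of the $1-p_i$'s.

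Concretely, I would first write out $JS_\alpha(X)$ from Definition \eqref{eq2} with $N=2$, $p_1 = p$, $p_2 = 1-p$, which gives
\begin{equation}
\nonumber
JS_\alpha(X) = \frac{1}{\alpha-1}\left(1 - (1-p)^\alpha - p^\alpha\right).
\end{equation}
Then I would write out $S_\alpha(X)$ from \eqref{eq3} under the same assumption, obtaining
\begin{equation}
\nonumber
S_\alpha(X) = \frac{1}{\alpha-1}\left(1 - p^\alpha - (1-p)^\alpha\right),
\end{equation}
and observe that the two expressions are identical. The key observation is simply that $N-1 = 1$ matches the constant in the Tsallis entropy formula, and the sum $\sum_{i=1}^2 (1-p_i)^\alpha = (1-p)^\alpha + p^\alpha = \sum_{i=1}^2 p_i^\alpha$.

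There is no real obstacle here; the lemma is essentially a bookkeeping check that the Tsallis extropy was defined in a way compatible with the analogous property \eqref{eq1} for Shannon entropy and extropy in the binary case. Its role is clearly as a stepping stone toward the proposition that extends the invariance identity to arbitrary $N$, which is why it is isolated as a lemma.
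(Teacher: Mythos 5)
Your proof is correct and follows essentially the same route as the paper: substitute $N=2$ with $(p_1,p_2)=(p,1-p)$ into the definition \eqref{eq2}, note that $N-1=1$ and $\sum_{i=1}^2 (1-p_i)^{\alpha}=p^{\alpha}+(1-p)^{\alpha}$, and identify the result with $S_{\alpha}(X)$. Nothing further is needed.
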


\begin{proof}
From \eqref{eq2}, with $N=2$, we find
\begin{eqnarray}
\nonumber
JS_{\alpha}(X)&=&\frac{1}{\alpha-1}\left[1-(1-p)^{\alpha}-\{1-(1-p)\}^{\alpha}\right] \\
\nonumber
&=&\frac{1}{\alpha-1}\left\{1-p^{\alpha}-(1-p)^{\alpha}\right\}=S_{\alpha}(X).
\end{eqnarray}
\end{proof}

\begin{proposition}
\label{prsum}
Let $X$ be a discrete random variable with finite support $S$ and with corresponding probability vector $\bold p$. Then,
\begin{equation}
S_{\alpha}(X)+JS_{\alpha}(X)=\sum_{i=1}^N S_{\alpha}(p_i,1-p_i)=\sum_{i=1}^N JS_{\alpha}(p_i,1-p_i),
\end{equation}
where $S_{\alpha}(p_i,1-p_i)$ and $JS_{\alpha}(p_i,1-p_i)$ are the Tsallis entropy and extropy of a discrete random variable taking on two values with corresponding probabilities $(p_i,1-p_i)$.
\end{proposition}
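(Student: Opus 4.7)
The proposition has two equalities to establish, but Lemma \ref{lem1} immediately collapses the second one: since each summand $S_{\alpha}(p_i,1-p_i)$ equals $JS_{\alpha}(p_i,1-p_i)$, the two sums on the right-hand side are termwise identical. So the plan reduces to proving the single identity $S_{\alpha}(X)+JS_{\alpha}(X)=\sum_{i=1}^N S_{\alpha}(p_i,1-p_i)$, and this is a direct calculation from the definitions.

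First I would substitute the defining formulas \eqref{eq3} and \eqref{eq2} for the left-hand side, factoring out $\frac{1}{\alpha-1}$ to obtain $\frac{1}{\alpha-1}\bigl(N-\sum_{i=1}^N p_i^{\alpha}-\sum_{i=1}^N (1-p_i)^{\alpha}\bigr)$, where the constants $1$ and $N-1$ from the two definitions combine to $N$. Next I would expand the right-hand side using $S_{\alpha}(p_i,1-p_i)=\frac{1}{\alpha-1}\bigl(1-p_i^{\alpha}-(1-p_i)^{\alpha}\bigr)$, sum over $i=1,\dots,N$, and observe that the $N$ copies of $1$ contribute exactly the $N$ obtained on the left.

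The two expressions match termwise, so the identity holds, and combining with Lemma \ref{lem1} finishes the proof. There is no real obstacle here: the argument is pure bookkeeping, and the only point worth flagging is that the asymmetric constants in the definitions ($1$ for $S_{\alpha}$, $N-1$ for $JS_{\alpha}$) are precisely what is needed to reconstruct the $N$ ones arising from the two-value Tsallis entropies on the right, which is essentially the justification for the choice of normalization $N-1$ in Definition \eqref{eq2} and hence the deeper meaning alluded to in the remark following that definition.
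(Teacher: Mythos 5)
Your proposal is correct and follows essentially the same route as the paper's proof: invoke Lemma \ref{lem1} to dispose of the second equality, then verify the first by direct substitution of \eqref{eq3} and \eqref{eq2}, with the constants $1$ and $N-1$ combining to give the $N$ that splits into $N$ copies of $1$ across the sum. Your closing observation about the normalization $N-1$ being exactly what makes this bookkeeping work is a nice touch consistent with the paper's remark, but the core argument is identical.
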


\begin{proof}
We have to prove only the first equality, since the second one is given in Lemma \ref{lem1}. From Equations \eqref{eq3}--\eqref{eq2}, we have
\begin{eqnarray}
\nonumber
S_{\alpha}(X)+JS_{\alpha}(X)&=&\frac{1}{\alpha-1}\left\{N-\sum_{i=1}^N p_i^{\alpha} -\sum_{i=1}^N (1-p_i)^{\alpha}\right\} \\
\nonumber
&=&\frac{1}{\alpha-1}\sum_{i=1}^N \left\{1-p_i^{\alpha} - (1-p_i)^{\alpha}\right\} \\
\nonumber
&=& \sum_{i=1}^N S_{\alpha}(p_i,1-p_i),
\end{eqnarray}
as required.
\end{proof}

In the following proposition, we will show that for the choice of the parameter $\alpha=2$, the Tsallis entropy and extropy coincide.

\begin{proposition}
Let $X$ be a discrete random variable with finite support $S$ of cardinality $N$. Then, $S_2(X)=JS_2(X)$.
\end{proposition}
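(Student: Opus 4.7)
The plan is to prove the identity by direct computation, substituting $\alpha=2$ into the definitions of $S_\alpha$ in \eqref{eq3} and $JS_\alpha$ in \eqref{eq2}. At $\alpha=2$ the factor $1/(\alpha-1)$ collapses to $1$, so the statement amounts to
\[
1-\sum_{i=1}^N p_i^2 \;=\; N-1-\sum_{i=1}^N (1-p_i)^2.
\]

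First I would expand $(1-p_i)^2 = 1 - 2p_i + p_i^2$ inside the sum on the right-hand side. Using the normalization $\sum_{i=1}^N p_i = 1$, this yields
\[
\sum_{i=1}^N (1-p_i)^2 = N - 2 + \sum_{i=1}^N p_i^2,
\]
and substituting into $JS_2(X)$ immediately gives $N-1-(N-2)-\sum p_i^2 = 1-\sum p_i^2 = S_2(X)$. So the whole argument is a one-line algebraic manipulation hinging on the normalization condition.

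As a sanity check, the result is also consistent with Proposition \ref{prsum}: at $\alpha=2$ one has $S_2(p_i,1-p_i) = 2p_i(1-p_i)$, and summing gives $2-2\sum p_i^2 = 2 S_2(X)$, which agrees with $S_2(X)+JS_2(X)=2S_2(X)$ when $S_2=JS_2$. There is no genuine obstacle in this proof; the only thing to be mildly careful about is keeping track of the constants $N$, $1$, and $2$ produced by the expansion, since the factor $N-1$ in the definition of $JS_\alpha$ is precisely what compensates for the cross-term $-2\sum p_i$ and makes the two measures coincide at $\alpha=2$.
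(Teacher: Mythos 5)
Your proposal is correct and follows exactly the paper's own argument: substitute $\alpha=2$, expand $(1-p_i)^2$, and use the normalization $\sum_i p_i=1$ to reduce $JS_2(X)$ to $1-\sum_i p_i^2=S_2(X)$. The consistency check against Proposition \ref{prsum} is a nice extra but not needed.
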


\begin{proof}
From \eqref{eq2}, upon choosing $\alpha=2$, we obtain
\begin{eqnarray}
\nonumber
JS_{2}(X)&=&\frac{1}{2-1}\left(N-1-\sum_{i=1}^N (1-p_i)^{2}\right) \\
\nonumber
&=&N-1-\sum_{i=1}^N (1+p_i^2-2p_i) \\
\nonumber
&=& N-1-N-\sum_{i=1}^N p_i^2 +2 \sum_{i=1}^N p_i \\
\nonumber
&=& 1-\sum_{i=1}^N p_i^2= S_2(X),
\end{eqnarray}
as required.
\end{proof}

In the following theorem, we will prove that the Tsallis entropy is always greater than the Tsallis extropy for $\alpha<2$ and that the reverse inequality holds for $\alpha>2$.
\begin{theorem}
For any discrete random variable $X$ with support of cardinality $N\geq3$, we have
\begin{eqnarray}
\nonumber
S_{\alpha}(X)&\geq& JS_{\alpha}(X) \ \ \mbox{if} \ \ 0<\alpha<2, \\
\nonumber
S_{\alpha}(X)&\leq& JS_{\alpha}(X) \ \ \mbox{if} \ \ \alpha>2. 
\end{eqnarray}
\end{theorem}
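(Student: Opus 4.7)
My plan is to reduce the inequality to the base case $N = 3$ via a telescoping algebraic identity, and then to handle that case through an integral representation of a triple iterated finite difference.

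Put $G_N(\mathbf p) := (\alpha - 1)[S_\alpha(X) - JS_\alpha(X)] = (2 - N) + \sum_{i=1}^N (1-p_i)^\alpha - \sum_{i=1}^N p_i^\alpha$. A direct expansion, in which the contributions from indices $1,\dots,N-2$ cancel, yields the identity
\[
G_N(p_1, \dots, p_N) = G_{N-1}\bigl(p_1, \dots, p_{N-2}, p_{N-1} + p_N\bigr) + G_3\bigl(p_{N-1}, p_N, 1 - p_{N-1} - p_N\bigr).
\]
The triple on the right is a probability vector, and Lemma~\ref{lem1} gives $G_2 \equiv 0$. Iterating this identity writes $G_N$ as a finite sum of $G_3$-values evaluated on probability triples, so the sign of $S_\alpha(X) - JS_\alpha(X)$ is governed by that of $G_3$.

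For $N = 3$, using $1 - p_i = \sum_{j\ne i} p_j$ and $(p_1+p_2+p_3)^\alpha = 1$, one rewrites
\[
G_3(p_1, p_2, p_3) = \sum_{S \subseteq \{1,2,3\}} (-1)^{|S|} \Bigl(\sum_{i \in S} p_i\Bigr)^{\!\alpha},
\]
which is, up to sign, the triple iterated forward difference $(\Delta_{p_1}\Delta_{p_2}\Delta_{p_3}\, f)(0)$ of $f(x) = x^\alpha$, where $\Delta_h g(x) := g(x+h) - g(x)$. The Hermite--Genocchi representation of this iterated difference then gives
\[
G_3 = -\,p_1 p_2 p_3 \,\alpha(\alpha-1)(\alpha-2) \int_{[0,1]^3} (t_1 p_1 + t_2 p_2 + t_3 p_3)^{\alpha-3}\,dt_1\,dt_2\,dt_3,
\]
since $f'''(x) = \alpha(\alpha-1)(\alpha-2)\,x^{\alpha-3}$. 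The integral is strictly positive (and $p_1 p_2 p_3 \ge 0$), so $\mathrm{sgn}(G_3) = -\mathrm{sgn}(\alpha(\alpha-1)(\alpha-2))$.

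Dividing $G_N$ by $\alpha - 1$ finally gives $\mathrm{sgn}(S_\alpha(X) - JS_\alpha(X)) = -\mathrm{sgn}(\alpha(\alpha-2))$, which is positive on $(0,2)$ and negative on $(2, \infty)$, exactly as claimed. The main obstacle is the base case: a term-by-term comparison is doomed because $(1-p)^\alpha - p^\alpha$ is antisymmetric about $p = 1/2$; packaging $G_3$ as a third-order iterated difference bypasses this by tying the sign directly to that of $f'''$, and the telescoping identity then transfers the conclusion to every $N \ge 3$.
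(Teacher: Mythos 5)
Your proof is correct, but it takes a genuinely different route from the paper. The paper argues by constrained optimization: it forms the Lagrange function for $S_{\alpha}(X)-JS_{\alpha}(X)$ on the simplex, claims the only critical configurations are the degenerate and the uniform distributions, evaluates the difference at the uniform distribution, and determines the sign of the resulting numerator $g(\alpha)=2N^{\alpha-1}-N^{\alpha}-1+(N-1)^{\alpha}$ by locating the maximizer of $g$ between $1$ and $2$ --- a step that rests on the inequality \eqref{confronto}, which is verified only graphically (Figure \ref{figconf}). Your telescoping identity $G_N=G_{N-1}(p_1,\dots,p_{N-2},p_{N-1}+p_N)+G_3(p_{N-1},p_N,1-p_{N-1}-p_N)$ checks out (I verified the cancellation of the $q^{\alpha}$ and $(1-q)^{\alpha}$ terms), as does the inclusion--exclusion rewriting of $G_3$, using $p_i+p_j=1-p_k$, as $-(\Delta_{p_1}\Delta_{p_2}\Delta_{p_3}f)(0)$ for $f(x)=x^{\alpha}$; the Hermite--Genocchi representation then pins the sign to $-\operatorname{sgn}\bigl(\alpha(\alpha-1)(\alpha-2)\bigr)$, and dividing by $\alpha-1$ gives exactly the claimed dichotomy at $\alpha=2$. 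What your approach buys is considerable: it is fully self-contained (no plot-based verification), it sidesteps the soft spots in the paper's critical-point analysis (the level set of $h(x)=x^{\alpha-1}+(1-x)^{\alpha-1}$ is a two-point set $\{a,1-a\}$, so mixed configurations must be ruled out, and the boundary of the simplex is never addressed there), and it yields strictness whenever at least three $p_i$ are positive. The one point you should make explicit is that $f(x)=x^{\alpha}$ is not $C^3$ at $x=0$ for $\alpha<3$, so the iterated-integral identity needs a word of justification: the integral $\int_{[0,1]^3}(t_1p_1+t_2p_2+t_3p_3)^{\alpha-3}\,dt$ converges for all $\alpha>0$ (near the origin it behaves like $\int_0^1 r^{\alpha-3}r^{2}\,dr$), and the identity extends to the node $0$ by applying the fundamental theorem of calculus coordinate-by-coordinate for almost every value of the remaining parameters together with Fubini, or by replacing $0$ with $\varepsilon>0$ and letting $\varepsilon\to0^{+}$. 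This is routine but worth a sentence in a final write-up.
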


\begin{proof}
First of all, we remark that for $\alpha=1$ we mean the limit case in which we obtain the well known result about entropy and extropy. Let us consider the difference between Tsallis entropy and extropy given by
\begin{equation}
\nonumber
S_{\alpha}(X)-JS_{\alpha}(X)=\frac{1}{\alpha-1}\left[2-N-\sum_{i=1}^N p_i^{\alpha}+\sum_{i=1}^N (1-p_i)^{\alpha}\right].
\end{equation}
Then, we consider the Lagrange function $L$ defined as
\begin{equation}
\nonumber
L=S_{\alpha}(X)-JS_{\alpha}(X)+\lambda\left(\sum_{i=1}^N p_i -1\right),
\end{equation}
for which the partial derivatives with respect to $p_i$ are
\begin{equation}
\nonumber
\frac{\partial L}{\partial p_i} =\frac{-\alpha}{\alpha-1} \left(p_i^{\alpha-1}+(1-p_i)^{\alpha-1}\right)+\lambda,
\end{equation}
which vanish if and only if
\begin{equation}
\label{eqcost}
p_i^{\alpha-1}+(1-p_i)^{\alpha-1}=C,
\end{equation}
where $C$ is a constant. Then, we consider, for $0\leq x\leq 1$, the function $h(x)=x^{\alpha-1}+(1-x)^{\alpha-1}$, such that $h(x)=h(1-x)$ and $h(0)=h(1)=1$. The function $h$ has a minimum at $x=\frac{1}{2}$, if $\alpha>2$ or $0<\alpha<1$, and a maximum at the same point for $1<\alpha<2$. Then, in order to satisfy both \eqref{eqcost} and the normalization condition, we have only two possibilities. The first one is given by choosing one $p_i$ equal to $1$ and all the others equal to $0$, whereas the second one is given by $p_i=\frac{1}{N}$, $i=1,\dots,N$. These are the cases in which the difference between Tsallis entropy and extropy takes the maximum and the minimum values. In the first case, we have $S_{\alpha}(X)-JS_{\alpha}(X)=0$. In the second case, we obtain
\begin{eqnarray}
\nonumber
S_{\alpha}(X)-JS_{\alpha}(X)&=&\frac{1}{\alpha-1}\left[2-N-\sum_{i=1}^N \frac{1}{N^{\alpha}}+\sum_{i=1}^N \left(1-\frac{1}{N}\right)^{\alpha}\right] \\
\label{diffuni}
&=& \frac{2N^{\alpha-1}-N^{\alpha}-1+(N-1)^{\alpha}}{(\alpha-1)N^{\alpha-1}}.
\end{eqnarray}
Let us consider the numerator of \eqref{diffuni} as a function of $\alpha$, $g(\alpha)=2N^{\alpha-1}-N^{\alpha}-1+(N-1)^{\alpha}$. We have
\begin{equation}
\nonumber
g'(\alpha)=N^{\alpha-1}(\log N )(2-N)+(N-1)^{\alpha}\log(N-1),
\end{equation}
which is non negative if, and only if,
\begin{equation}
\nonumber
\alpha\leq\frac{\log\left[\frac{N}{N-2}\frac{\log(N-1)}{\log N}\right]}{\log\left(\frac{N}{N-1}\right)}=G(N).
\end{equation}
We have that, for $N\geq3$,
\begin{equation}
\label{confronto}
1<G(N)<2 \Longleftrightarrow \frac{N-2}{N-1}<\frac{\log(N-1)}{\log N} < \frac{N(N-2)}{(N-1)^2},
\end{equation}
which holds as one can see in Figure \ref{figconf}.

Then, the function $g$ has a maximum between $1$ and $2$ and $g(1)=g(2)=0$. Hence, we have $g(\alpha)>0$ if $1<\alpha<2$ and $g(\alpha)<0$ if $0<\alpha<1$ or $\alpha>2$. By recalling the definition of $g$ and \eqref{diffuni}, we obtain that the difference between Tsallis entropy and extropy for uniform distribution is greater than $0$ if $0<\alpha<1$ or $1<\alpha<2$ and less than $0$ if $\alpha>2$. Hence, $S_{\alpha}(X)-JS_{\alpha}(X)$ has minimum of $0$ and maximum for the uniform distribution if $0<\alpha<2$ and viceversa if $\alpha>2$.
\end{proof}

\begin{figure}
    \centering
         \includegraphics[scale=0.45]{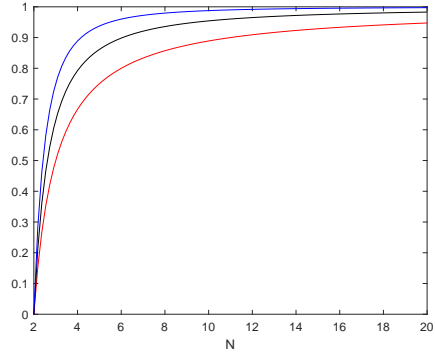} 
        \caption{Plot of the functions on the RHS of \eqref{confronto} in red, black and blue, respectively.}
        \label{figconf}
\end{figure}

\section{Maximum Tsallis extropy}
\label{sec3}

When we deal with a measure of information, it will be useful to know what its maximum value is. The Tsallis extropy reaches its maximum value when the random variable $X$ is uniformly distributed, as established in the following theorem.

\begin{theorem}
\label{thm1}
Let $X$ be a discrete random variable with finite support $S$ of cardinality $N$, and let $\alpha>0$, $\alpha\ne1$. Then, $X$ has maximum Tsallis extropy for fixed $N$ and $\alpha$ if, and only if, it is uniformly distributed.
\end{theorem}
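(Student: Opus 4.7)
The plan is to reduce the statement to a single one-variable convexity inequality. Since
\[
JS_\alpha(X) = \frac{1}{\alpha-1}\left(N-1-\sum_{i=1}^N (1-p_i)^\alpha\right),
\]
maximizing $JS_\alpha$ under the constraint $\sum_{i=1}^N p_i=1$ is equivalent to minimizing $\sum_{i=1}^N (1-p_i)^\alpha$ when $\alpha>1$, and to maximizing that same sum when $0<\alpha<1$, because the prefactor $1/(\alpha-1)$ changes sign at $\alpha=1$.

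The key tool I would use is Jensen's inequality applied to $\varphi(x)=x^\alpha$, which is strictly convex on $[0,1]$ for $\alpha>1$ and strictly concave there for $0<\alpha<1$. Evaluated at the points $1-p_1,\dots,1-p_N$ with equal weights $1/N$ and using $\sum_{i=1}^N(1-p_i)=N-1$, Jensen yields $\frac{1}{N}\sum_{i=1}^N(1-p_i)^\alpha\geq \left(\frac{N-1}{N}\right)^\alpha$ in the convex regime, and the reverse inequality in the concave regime. Multiplying through by $1/(\alpha-1)$ with the sign appropriate to each regime, both cases collapse to the same upper bound
\[
JS_\alpha(X)\leq \frac{N-1}{\alpha-1}\cdot\frac{N^{\alpha-1}-(N-1)^{\alpha-1}}{N^{\alpha-1}},
\]
which is precisely the value for the uniform distribution already computed in Example~\ref{ex1}.

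For the equality condition I would invoke the equality case of Jensen: strict convexity (respectively, strict concavity) of $x^\alpha$ for $\alpha\ne 1$ forces $1-p_1=\cdots=1-p_N$, and combined with $\sum_{i=1}^N p_i=1$ this pins down $p_i=1/N$ for every $i$. The step I expect to demand the most care is the sign bookkeeping that aligns the flip of $1/(\alpha-1)$ with the flip in Jensen's inequality across the ranges $0<\alpha<1$ and $\alpha>1$; once this is handled in parallel, both regimes lead to the same conclusion and I do not foresee a deeper obstacle. An equivalent route via Lagrange multipliers would give the stationarity condition that $(1-p_i)^{\alpha-1}$ is independent of $i$, and the strict monotonicity of $x\mapsto (1-x)^{\alpha-1}$ on $[0,1]$ combined with the concavity of $JS_\alpha$ as a function of $\mathbf p$ (its Hessian is diagonal with entries $-\alpha(1-p_i)^{\alpha-2}\leq 0$) would certify the uniform distribution as the unique global maximizer.
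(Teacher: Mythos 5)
Your proof is correct, and it takes a genuinely different route from the paper. The paper proceeds by Lagrange multipliers: it forms $JS_\alpha^*(\mathbf p)=JS_\alpha(\mathbf p)+\lambda\left(\sum_i p_i-1\right)$, sets $\frac{\partial JS_\alpha^*}{\partial p_i}=\frac{\alpha}{\alpha-1}(1-p_i)^{\alpha-1}+\lambda=0$, deduces that all $p_i$ are equal to a constant $K$, and concludes $K=1/N$ from the normalization — but it never verifies that this stationary point is in fact a global maximum rather than a minimum or saddle, nor does it examine the boundary of the simplex. Your Jensen argument closes exactly that gap: convexity (resp.\ concavity) of $x\mapsto x^\alpha$ together with $\sum_i(1-p_i)=N-1$ gives the bound $\sum_i(1-p_i)^\alpha\ge \frac{(N-1)^\alpha}{N^{\alpha-1}}$ for $\alpha>1$ and the reverse for $0<\alpha<1$, and after multiplying by $\frac{1}{\alpha-1}$ (whose sign flips in step with the inequality) both regimes yield the same global upper bound attained by the uniform law, with uniqueness coming for free from the strict-convexity equality case of Jensen. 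Your side remark that the Hessian of $JS_\alpha$ is diagonal with entries $-\alpha(1-p_i)^{\alpha-2}\le 0$ in \emph{both} parameter regimes (the $(\alpha-1)$ factors cancel) is also correct and is precisely the concavity certificate the paper's Lagrange argument would need to be complete. In short, your approach buys a genuinely rigorous global statement; the paper's buys only a first-order characterization of the candidate optimizer.
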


\begin{proof}
Let $N$ and $\alpha$ be fixed. Then, we need to maximize the function of $N$ variables given by
\begin{equation}
\nonumber
JS_{\alpha}\left(\bold p\right)=\frac{1}{\alpha-1}\left(N-1-\sum_{i=1}^N (1-p_i)^{\alpha}\right),
\end{equation}
subject to the condition
\begin{equation}
\label{eq6}
\sum_{i=1}^N p_i=1.
\end{equation}
For this purpose, let us consider the Lagrange function defined by
\begin{equation}
\nonumber
JS_{\alpha}^*\left(\bold p\right)=\frac{1}{\alpha-1}\left(N-1-\sum_{i=1}^N (1-p_i)^{\alpha}\right)+\lambda\left(\sum_{i=1}^N p_i-1\right),
\end{equation}
from which we readily find the partial derivatives respect to $p_i$, $i=1,\dots,N$, as
\begin{equation}
\nonumber
\frac{\partial JS_{\alpha}^*\left(\bold p\right)}{\partial p_i}=\frac{\alpha}{\alpha-1}(1-p_i)^{\alpha-1}+\lambda.
\end{equation}
Then, we can determine the stationary points as
\begin{equation}
\nonumber
\frac{\alpha}{\alpha-1}(1-p_i)^{\alpha-1}+\lambda=0 \Longleftrightarrow p_i=1-\left(\frac{(1-\alpha)\lambda}{\alpha}\right)^{\frac{1}{\alpha-1}}=K,
\end{equation}
where $K$ is a constant. For satisfying the condition in \eqref{eq6}, we need to have $K=\frac{1}{N}$, in which case $\bold p$ becomes the probability mass function vector of a discrete uniform distribution.
\end{proof}

\begin{remark}
In Example \ref{ex1}, the Tsallis extropy has been evaluated for the uniform distribution over $N$ elements, and so the maximum Tsallis extropy is as presented in \eqref{eq7}.
\end{remark}

\begin{theorem}
\label{thm2}
The Tsallis extropy is less than $1$.
\end{theorem}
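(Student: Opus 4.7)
The plan is to reduce the bound to the uniform distribution via Theorem~\ref{thm1} and then to invoke Bernoulli's inequality. By Theorem~\ref{thm1}, for any discrete random variable $X$ on a support of cardinality $N$ one has $JS_{\alpha}(X) \leq JS_{\alpha}(U)$, where $U$ denotes the uniform law on the same support; the maximal value $JS_{\alpha}(U)$ is exactly the closed-form expression \eqref{eq7} computed in Example~\ref{ex1}. It therefore suffices to show that \eqref{eq7} is strictly less than $1$ for every $N \geq 2$ and every admissible $\alpha$ (the case $N=1$ being trivial since the extropy vanishes there).

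Next, I would rewrite \eqref{eq7} in the more convenient equivalent form
\begin{equation*}
JS_{\alpha}(U) = \frac{1}{\alpha - 1}\left(N - 1 - N\left(1 - \tfrac{1}{N}\right)^{\alpha}\right),
\end{equation*}
which is obtained from $(N-1)^{\alpha}/N^{\alpha-1} = N(1-1/N)^{\alpha}$. The inequality $JS_{\alpha}(U) < 1$ then reduces, after multiplying through by $\alpha - 1$ and being careful with its sign, to the single statement $(1 - 1/N)^{\alpha} > 1 - \alpha/N$ when $\alpha > 1$ and to the reverse inequality $(1 - 1/N)^{\alpha} < 1 - \alpha/N$ when $0 < \alpha < 1$.

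These two statements are precisely the strict forms of Bernoulli's inequality applied at the value $x = -1/N \in (-1, 0)$: for $\alpha > 1$ one has $(1+x)^{\alpha} > 1 + \alpha x$, while for $0 < \alpha < 1$ one has $(1+x)^{\alpha} < 1 + \alpha x$, strictness following from $x \neq 0$ and $\alpha \neq 1$ together with the strict convexity (resp.\ concavity) of $t \mapsto t^{\alpha}$. Combining with Theorem~\ref{thm1} yields $JS_{\alpha}(X) \leq JS_{\alpha}(U) < 1$, as required.

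I do not foresee a real obstacle in this route; the only thing to spot is that \eqref{eq7}, after the algebraic bookkeeping above, is nothing more than the Bernoulli remainder in disguise. As a sanity check, the Taylor expansion of $(1 - 1/N)^{\alpha}$ around $0$ yields the asymptotic $JS_{\alpha}(U) = 1 - \alpha/(2N) + O(1/N^{2})$, which confirms that the constant $1$ in the statement is sharp as $N \to \infty$ and therefore cannot be replaced by any smaller universal bound.
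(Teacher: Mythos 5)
Your proposal is correct, but it reaches the bound $JS_{\alpha}(U)<1$ by a genuinely different route than the paper. Both arguments begin the same way, invoking Theorem \ref{thm1} to reduce the problem to the uniform distribution and the closed form \eqref{eq7}. From there the paper proceeds indirectly: it shows that $g(N)=N-1-(N-1)^{\alpha}/N^{\alpha-1}$ is monotone in $N$ (increasing for $\alpha>1$, decreasing for $0<\alpha<1$, via a mean value theorem argument on $N^{\alpha}-(N-1)^{\alpha}$), concludes that $JS_{\alpha}(X_N)$ is increasing in $N$ in both regimes, and then computes $\lim_{N\to\infty}JS_{\alpha}(X_N)=1$, so that every finite-$N$ value sits strictly below the limit. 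You instead verify the inequality directly for each fixed $N$: rewriting \eqref{eq7} as $\frac{1}{\alpha-1}\bigl(N-1-N(1-\tfrac1N)^{\alpha}\bigr)$ and clearing the factor $\alpha-1$ with the correct sign reduces $JS_{\alpha}(U)<1$ exactly to the strict Bernoulli inequality $(1-\tfrac1N)^{\alpha}\gtrless 1-\tfrac{\alpha}{N}$ according as $\alpha>1$ or $0<\alpha<1$; I checked the algebra in both cases and it is right, and the $N=1$ case is correctly dispatched as trivial. Your route is shorter and avoids both the monotonicity computation and the limit; what it gives up is the extra structural information the paper extracts along the way, namely that the maximal Tsallis extropy is increasing in the support size — though you recover the sharpness of the constant $1$ separately through the expansion $JS_{\alpha}(U)=1-\alpha/(2N)+O(1/N^{2})$, which is also correct. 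Either argument is a complete proof of the theorem.
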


\begin{proof}
To establish this result, we show that the Tsallis extropy of a discrete uniform distribution increases to $1$ as the size of the support $N$ increases. Let $X_N$ be a discrete random variable uniformly distributed over a finite support of size $N$. From \eqref{eq7}, we know the corresponding Tsallis extropy is
\begin{equation}
\nonumber
JS_{\alpha}(X_N)= \frac{1}{\alpha-1}\left(N-1- \frac{(N-1)^{\alpha}}{N^{\alpha-1}}\right).
\end{equation}
Let us now consider the function
\begin{equation}
\label{eq8}
g(N)=N-1- \frac{(N-1)^{\alpha}}{N^{\alpha-1}},
\end{equation}
and show that it increases for $\alpha>1$ and decreases for $0<\alpha<1$. This way, we will prove that $JS_{\alpha}(X_N)$ is increasing in $N$. Let us consider the derivative of $g$, by treating it as a function of a continuous variable $N$, given by
\begin{equation}
\nonumber
g'(N)=1-\frac{(N-1)^{\alpha-1}}{N^{\alpha}}(\alpha+N-1)=\frac{N^{\alpha}-(N-1)^{\alpha-1}(\alpha+N-1)}{N^{\alpha}},
\end{equation}
whose sign is determined by
\begin{equation}
\nonumber
N^{\alpha}-(N-1)^{\alpha-1}(\alpha+N-1)=N^{\alpha}-(N-1)^{\alpha}-\alpha(N-1)^{\alpha-1},
\end{equation}
which, by mean value theorem, is equal to
\begin{equation}
\nonumber
\alpha(N-1+\varepsilon)^{\alpha-1}-\alpha(N-1)^{\alpha-1},
\end{equation}
for some $\varepsilon\in(0,1)$. Thence, by using the fact that the function $h(x)=x^{\alpha-1}$ is incresing in $x>0$ for $\alpha>1$ and decreasing for $0<\alpha<1$, we get the monotonicity of $g(N)$ in \eqref{eq8}.

Now, we evaluate the limit of $JS_{\alpha}(X_N)$ as $N$ tends to infinity. We have
\begin{eqnarray}
\nonumber
\lim_{N\to+\infty} \frac{N-1}{\alpha-1} \frac{(N^{\alpha-1}-(N-1)^{\alpha-1})}{N^{\alpha-1}}&=& \lim_{N\to+\infty} \frac{N-1}{\alpha-1}\left[1-\left(1-\frac{1}{N}\right)^{\alpha-1}\right] \\
\nonumber
&=&  \lim_{N\to+\infty} \frac{N-1}{N}=1.
\end{eqnarray}
Finally, upon using the result in Theorem \ref{thm1} about the maximum Tsallis extropy, we conclude that the Tsallis extropy is less than $1$ for any discrete random variable.
\end{proof}

\begin{corollary}
For any discrete random variable $X$, we have
\begin{equation}
\nonumber
0\leq JS_{\alpha}(X)<1.
\end{equation}
\end{corollary}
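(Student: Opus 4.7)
The plan is to observe that the corollary is a direct combination of two results already established in the excerpt, so the proof is essentially a one-line assembly rather than a new argument.

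First I would handle the lower bound. Proposition \ref{prop1} establishes that $JS_{\alpha}(X) \geq 0$ for every discrete random variable $X$ and every admissible $\alpha$, so the left inequality is immediate. I would also briefly note that equality is attained, e.g.\ by the degenerate distribution $\bold p = (1, 0, \dots, 0)$, so the bound $0$ is sharp (as already remarked after Proposition \ref{prop1}).

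Next I would handle the strict upper bound by invoking Theorem \ref{thm2}. That theorem states precisely that $JS_{\alpha}(X) < 1$ for every discrete random variable $X$; the argument there combined Theorem \ref{thm1} (the uniform distribution maximises $JS_{\alpha}$ for fixed $N$) with the monotonicity of $JS_{\alpha}(X_N)$ in $N$ and the limit $\lim_{N\to\infty} JS_{\alpha}(X_N) = 1$, so the supremum $1$ is approached but never reached. No new computation is needed.

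There is no real obstacle: the corollary is stated only as a convenient repackaging of Proposition \ref{prop1} and Theorem \ref{thm2}. The only thing to be slightly careful about is emphasising that the right inequality is strict, and that this strictness comes from the fact that the limit value $1$ is attained only asymptotically as $N \to \infty$, never for a genuine finite-support random variable.
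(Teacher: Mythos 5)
Your proposal is correct and matches the paper's proof exactly: the corollary is obtained by combining Proposition \ref{prop1} (non-negativity) with Theorem \ref{thm2} (the strict upper bound of $1$). The additional remarks on sharpness of both bounds are accurate but not needed for the argument.
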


\begin{proof}
The result follows readily from Proposition \ref{prop1} and Theorem \ref{thm2}.
\end{proof}

\section{Application to pattern recognition}
\label{sec4}

In this section, we give an application of the Tsallis extropy in pattern recognition by using the well-known Iris dataset in \cite{dhreeru}. We then compare the results obtained with those in the Dempster-Shafer theory of evidence \cite{dempster} due to Kang et al. \cite{kang2} and Buono and Longobardi \cite{buono}. The objective is to classify among three classes of flowers: Iris Setosa (Se), Iris Versicolour (Ve) and Iris Virginica (Vi). The dataset consists of $150$ samples, with $50$ in each class. The characteristics measured for each flower are: the sepal length in cm (SL), the sepal width in cm (SW), the petal length in cm (PL), the petal width in cm (PW) and the class (one of Se, Ve and Vi). We select 40 samples for each kind of Iris and then we find a sample of max-min value to generate a model of interval numbers, as shown in Table \ref{tab1}. Each element of the dataset can be regarded as an unknown test sample. Suppose the selected sample data is (6.1, 3.0, 4.9, 1.8, Vi).

\begin{table}[h!]
\centering
\caption{(a) The interval numbers of the statistical model. (b) Probability distributions based on Kang's method.}
\subtable[]{
\label{tab1}
\centering
\begin{tabular}{ccccc}
\toprule
\textbf{Item}	& \textbf{SL} &	\textbf{SW}  & \textbf{PL}  & \textbf{PW}  \\
\midrule
$Se$         & [4.4,5.8]       &	[2.3,4.4]  & [1.0,1.9]  &   [0.1,0.6]     \\
$Ve$           & [4.9,7.0]        &  [2.0,3.4]	  & [3.0,5.1]  &  [1.0,1.7]    \\
$Vi$         & [4.9,7.9]   &	[2.2,3.8]  & [4.5,6.9]  &  [1.4,2.5]    \\
\bottomrule
\end{tabular}
}
\subtable[]{
\centering
\begin{tabular}{ccccc}
\toprule
\textbf{Item}	& \textbf{SL} &	\textbf{SW}  & \textbf{PL}  & \textbf{PW}  \\
\midrule
$\mathbb P(Se)$         & 0.3058     &	0.2748 &  0.1391  & 0.1563      \\
$\mathbb P(Ve)$           & 0.4148        &  0.3516	 &  0.3801   & 0.3737    \\
$\mathbb P(Vi)$         & 0.2794  &	0.3736 &  0.4808 & 0.4700    \\
\bottomrule
\label{table3}
\end{tabular}
}
\end{table}

We then generate four discrete probability distributions using the method of Kang et al. \cite{kang2} based on the similarity of interval numbers. Given two intervals $A=[a_1,a_2]$ and $B=[b_1,b_2]$, their similarity $S(A,B)$ is defined as
\begin{equation}
\nonumber
S(A,B)=\frac{1}{1+\gamma \ D(A,B)},
\end{equation}
where $\gamma>0$ is the coefficient of support, and we used $\gamma=5$, for example. Then, $D(A,B)$, the distance between intervals $A$ and $B$, is defined to be
\begin{equation}
\nonumber
D^2(A,B)=\left[\left(\frac{a_1+a_2}{2}\right)-\left(\frac{b_1+b_2}{2}\right)\right]^2+\frac{1}{3}\left[\left(\frac{a_2-a_1}{2}\right)^2+\left(\frac{b_2-b_1}{2}\right)^2\right].
\end{equation}
To generate probability distributions, the intervals given in Table \ref{tab1} are used for interval $A$ and for interval $B$ we use singletons given by the selected sample. For each one of the four characteristics measured, we get three values of similarity and then we obtain a probability distribution by normalizing them (see Table \ref{table3}). We then evaluate the Tsallis extropy of these probability distributions, as presented in Table \ref{table3bis}, wherein we have used $\alpha=0.5,\ 0.7,\ 1.5,\ 2$.

\begin{table}[h!]
\centering
\caption{Tsallis extropy (a) and the weights (b) for different choices of $\alpha$.}
\subtable[]{
\centering
\begin{tabular}{ccccc}
\toprule
\textbf{Item}	& \textbf{SL} &	\textbf{SW}  & \textbf{PL}  & \textbf{PW}  \\
\midrule
$\alpha=0.5$            & 0.8941    & 0.8965  & 0.8715  &   0.8759   \\
$\alpha=0.7$            & 0.8560   & 0.8592  & 0.8267  &   0.8324   \\
$\alpha=1.5$            & 0.7245    & 0.7291  & 0.6781  &   0.6871   \\
$\alpha=2$            & 0.6564    & 0.6613  & 0.6050  &   0.6150  \\

\bottomrule
\label{table3bis}
\end{tabular}
}
\subtable[]{
\centering
\begin{tabular}{ccccc}
\toprule
	& $\omega(SL)$ &	$\omega(SW)$  & $\omega(PL)$  & $\omega(PW)$  \\
\midrule
$\alpha=0.5$            & 0.2476    & 0.2470  & 0.2533  &   0.2522   \\
$\alpha=0.7$            & 0.2469   & 0.2461  & 0.2542  &   0.2528   \\
$\alpha=1.5$            & 0.2450    & 0.2439  & 0.2567  &   0.2544   \\
$\alpha=2$            & 0.2445    & 0.2433  & 0.2574  &   0.2548  \\

\bottomrule
\label{tabwe}
\end{tabular}
}
\end{table}

We use the Tsallis extropies in Table \ref{table3bis} to generate other probability distributions. Observe that the higher the extropy, the higher the uncertainty, and so it would be reasonable to give more weight to observations related to characteristics with lower Tsallis extropy. We refer to the obtained Tsallis extropies as $JS_{\alpha}(SL), \ JS_{\alpha}(SW), \ JS_{\alpha}(PL),\  JS_{\alpha}(PW)$. Due to the monotonicity of the exponential function, we choose as baseline weight the function $w(x)=\mathrm e^{-x}$, and we can then obtain the weights $\omega$ by normalization. For example, for the sepal length, we have
\begin{equation}
\nonumber
\omega(SL)=\frac{\mathrm e^{-JS_{\alpha}(SL)}}{\mathrm e^{-JS_{\alpha}(SL)}+\mathrm e^{-JS_{\alpha}(SW)}+\mathrm e^{-JS_{\alpha}(PL)}+\mathrm e^{- JS_{\alpha}(PW)}}.
\end{equation}
The values of the weights are listed in Table \ref{tabwe} for different choices of the parameter $\alpha$. We determine a final probability distribution in the following way: for each kind of flower, we have four probabilities, one for a specific characteristic; we multiply the probabilities given in Table \ref{table3} by the corresponding weights and then sum the values relating to the same class. For example, the probability of the class Iris Setosa is obtained as follows:
\begin{equation}
\nonumber
\mathbb P(Se)=0.3058 \cdot \omega(SL)+ 0.2748 \cdot \omega(SW)+ 0.1391 \cdot \omega(PL)+ 0.1563 \cdot \omega(PW).
\end{equation}
Thus, by choosing $\alpha=0.5$, we obtain the final probability distribution to be
\begin{equation}
\nonumber
\mathbb P(Se)=0.2182, \ \ \mathbb P(Ve)=0.3800, \ \ \mathbb P(Vi)=0.4018,
\end{equation}
and then the decision is that the selected flower belongs to the class with the higher probability, Iris Virginica, i.e., we thus made the correct decision, in this case.

In this manner, we tested all 150 samples for different values of $\alpha$, and observed that the overall recognition rate of this method based on the Tsallis extropy to be 94.66\%. The results obtained this way are compared with the recognition rates of the methods of Kang et al. \cite{kang2} and Buono and Longobardi \cite{buono}, and these are presented in Table \ref{table5}. The proposed method is seen to present a slightly better performance in comparison to the other two methods.

\begin{table}[h!]
\caption{The recognition rates of different methods.}
\centering
\begin{tabular}{ccccc}
\toprule
\textbf{Item}	& \textbf{Se} &	\textbf{Ve}  & \textbf{Vi}  & \textbf{Overall}  \\
\midrule
Kang's method         & 100\%    &	96\% & 84\%  & 93.33\%   \\
Buono and Longobardi's method         & 100\%      &  96\%	  & 86\% & 94\%  \\
Method based on Tsallis extropy     & 100\%    &	98\% & 86\%  & 94.66\%   \\

\bottomrule
\label{table5}
\end{tabular}
\end{table}

\section{Conclusions}
\label{sec5}
In this paper we have studied the Tsallis extropy. It is a measure of uncertainty dual to the Tsallis entropy and it is introduced by preserving a classical invariance property about entropy and extropy. We have discussed some properties of the proposed measure and have given several examples. In particular, we have examined the problem of the maximum Tsallis extropy which is always of great interest in information theory. Finally, we have illustrated an application in the context of pattern recognition by using the proposed measure, and we have then compared the recognition rates with two other known methods. It will be of interest to introduce some other new measures with interesting properties and also discuss the optimal choice of parameter $\alpha$ in the proposed measure.

\section*{Acknowledgements}
Narayanaswamy Balakrishnan thanks the Natural Sciences and Engineering Research Council of Canada for funding this research through an Individual Discovery Grant. Francesco Buono and Maria Longobardi are members of the research group GNAMPA of INdAM (Istituto Nazionale di Alta Matematica) and are partially supported by MIUR-PRIN 2017, project ``Stochastic Models for Complex Systems'', no. 2017 JFFHSH. Our sincere thanks also go to the anonymous reviewers and the Editor for their useful comments on an earlier version of the manuscript which led to this improved version.

\end{document}